\newtheorem{lemma}{Lemma}
\newtheorem{theorem}[lemma]{Theorem}
\title[A few more trees]{A few more trees the chromatic symmetric function can distinguish}
\author{Jake Huryn}
\address{The Ohio State University.}
\email{huryn.5@osu.edu}
\date{}
\begin{document}

\begin{abstract}
A well-known open problem in graph theory asks whether Stanley's \textit{chromatic symmetric function}, a generalization of the chromatic polynomial of a graph, distinguishes between any two non-isomorphic trees. Previous work has proven the conjecture for a class of trees called \textit{spiders}. This paper generalizes the class of spiders to \textit{$n$-spiders}, where normal spiders correspond to $n=1$, and verifies the conjecture for $n=2$.
\end{abstract}

\maketitle

\section{The Chromatic Symmetric Function and its Coefficients}

Let $G=(V,E)$ be a graph, and let $\mathcal K(F)$ denote the set of connected components of the graph $(V,F)$ for any $F\subset E$. Then the \textit{chromatic symmetric function} of $G$ can be expressed as \cite[Theorem 2.5]{St} \begin{equation}\label{csf}
X_G=\sum_{F\subset E}{\left((-1)^{|F|}\prod_{K\in\mathcal K(F)}p_{|V(K)|}\right)}.
\end{equation}
This is the so-called \textit{subsets of edges} formulation of $X_G$, for reasons which will shortly be made clear. Here the symbols $p_k$ represent the power sum symmetric functions, but will be treated as formal commuting indeterminants. Up to the sign $(-1)^{|F|}$ this polynomial was introduced in \cite[Section 1.3]{CDL94}, motivated by problems in knot theory, and its relationship with Stanley's definition was observed in \cite[Theorem 6.1]{NW99}. The question of whether this invariant distinguishes all non-isomorphic trees has been studied for many specific classes of trees, for example in \cite{MMW} and \cite{LS}, and has been computationally verified for all trees with up to 29 vertices \cite{HJ}.

Recall that a \textit{tree} is a connected graph lacking cycles. The removal of any edge of a tree disconnects the graph into a disjoint union of two trees, and so the removal of $n$ edges produces a disjoint union of $n+1$ trees. A \textit{leaf} of a tree is a vertex of degree one.

Let $T=(V,E)$ be a tree. We will discuss subsets $F\subset E$ by how many edges are removed from $E$, which is one fewer than the number of connected components of $(V,F)$, and thus one fewer than the number of factors in the product \[\prod_{K\in\mathcal K(F)}p_{|V(K)|}.\] This means that no cancellation will occur in $X_T$, since if terms have opposite signs, they must have a different number of $p_k$ factors. (This is false if $T$ is not a tree, and indeed, general graphs are not distinguished by their chromatic symmetric functions; see \cite{MMW} for examples.) This lack of cancellation is important because it allows us to give, for trees, a coherent combinatorial interpretation of (\ref{csf}).

Moreover, each $F\subset E$ induces a partition $\lambda=(\lambda_1,\dots,\lambda_\ell)$ of $|V|$. The parts of $\lambda$ are the numbers $|V(K)|$ for all $K\in\mathcal K(F)$, since the connected components of $(V,F)$ partition $V$ itself. We see that in (1), the term in the sum corresponding to $F$ will be $(-1)^{|F|}p_{\lambda_1}\!\cdots p_{\lambda_\ell}$. Given a partition $\lambda$ of $|V|$, denote the absolute value of the coefficient on $p_{\lambda_1}\!\cdots p_{\lambda_\ell}$ in $X_T$ (now considering all subsets of $E$) by $c_\lambda(T)=c_{\lambda_2,\dots,\lambda_\ell}(T)$. Thus $c_\lambda(T)$ is the number of subsets of the edge set which induce the partition $\lambda$. We will write the subscripts in weakly decreasing order, omitting the largest part, and often simply write $c_\lambda$ or $c_{\lambda_2,\dots,\lambda_\ell}$ when working with a specific tree.

We have come to the combinatorial interpretation for $c_\lambda$ (and thus for (\ref{csf})), which represents the number of ways we can remove $\ell-1$ edges from $T$ (an \textit{$(\ell-1)$-cut} of $T$) to partition $T$, via the spanning subgraph, into connected components of order $\lambda_1,\dots,\lambda_\ell$ (Figure~\ref{fig1}). In particular, the reader is encouraged to check that $c_1$ is the number of leaves of $T$. Intuitively, $X_T$ contains all information about the sizes of the components we can get when we remove any subset of the tree's edges. Stanley's conjecture is exactly that this data is sufficient to completely determine any tree's isomorphism class.

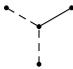
\begin{figure}
\begin{tikzpicture}[scale=1/2]
\draw[densely dashed] (0,0) -- (0,1) -- ({-sqrt(3)/2},3/2);
\draw (0,1) -- ({+sqrt(3)/2},3/2);
\draw[fill] (0,0) circle [radius=0.05];
\draw[fill] (0,1) circle [radius=0.05];
\draw[fill] ({-sqrt(3)/2},3/2) circle [radius=0.05];
\draw[fill] ({+sqrt(3)/2},3/2) circle [radius=0.05];
\end{tikzpicture}
\caption{Dashed lines represent deletions, i.e., edges not in the chosen subset. The figure shows a 2-cut corresponding to the product $p_1^2p_2$ and contributing to the coefficient $c_{1,1}$. Note that there are three distinct ways to make a 2-cut of this tree which yields the same partition, so $c_{1,1}=3$ and $X_T$ has a $-3p_1^2p_2$ term. Considering all subsets of edges, we find that $X_T=p_1^4-3p_1^2p_2+3p_1p_3-p_4$, where each term corresponds to successively larger subsets.\label{fig1}}
\end{figure}

The difficulty of Stanley's conjecture, in an informal sense, comes from the fact that the information contained in each coefficient of $X_T$ is nonlocal; for example, we can determine from $X_T$ the number of vertices, number of leaves, the degree sequence, and the path sequence \cite[Corollary 5]{MMW}. The problem is how we can piece together this information, along with the other information in the coefficients, to see the small-scale shapes of the tree and how these structures are connected.

This paper attempts to develop a framework which allows for this to be done, and uses this framework to prove that a particular infinite family of trees is distinguished by the chromatic symmetric function.

\section{Rooted Subtrees and $X_T$}

Of central importance in the following results is the use of \textit{rooted trees}, that is, trees with one vertex designated as the root. Let $r(n)$ be the number of rooted tree isomorphism classes having order $n$. We will name each rooted tree isomorphism class $R_{n,i}$ (giving the same name to any rooted tree in this class), where $n$ is the order of the rooted tree and $i\in\{1,\dots,r(n)\}$ is an indexing of all isomorphism classes of order $n$. In particular, let $R_{n,1}$ denote a path with the root at a leaf, and, for $n\geq2$, let $R_{n+1,2}$ be the same, with the addition of a single vertex appended to the second outward-most vertex from the root (Figure~\ref{fig2}).

\begin{figure}[h]
\begin{tikzpicture}[scale=1/2]
\draw[densely dashed] (-6,-1) -- (-6,0);
\draw[fill] (-6,0) circle [radius=0.08];
\draw[densely dashed] (-3,-1) -- (-3,0);
\draw (-3,0) -- (-3,1);
\draw[fill] (-3,0) circle [radius=0.08];
\draw[fill] (-3,1) circle [radius=0.05];
\draw[densely dashed] (0,-1) -- (0,0);
\draw (0,0) -- (0,1) -- (0,2);
\draw[fill] (0,0) circle [radius=0.08];
\draw[fill] (0,1) circle [radius=0.05];
\draw[fill] (0,2) circle [radius=0.05];
\draw[densely dashed] (3,-1) -- (3,0);
\draw ({3+sqrt(3)/2},1/2) -- (3,0) -- ({3-sqrt(3)/2},1/2);
\draw[fill] (3,0) circle [radius=0.08];
\draw[fill] ({3-sqrt(3)/2},1/2) circle [radius=0.05];
\draw[fill] ({3+sqrt(3)/2},1/2) circle [radius=0.05];
\draw[densely dashed] (6,-1) -- (6,0);
\draw (6,0) -- (6,1) -- ({6-sqrt(3)/2},3/2);
\draw (6,1) -- ({6+sqrt(3)/2},3/2);
\draw[fill] (6,0) circle [radius=0.08];
\draw[fill] (6,1) circle [radius=0.05];
\draw[fill] ({6-sqrt(3)/2},3/2) circle [radius=0.05];
\draw[fill] ({6+sqrt(3)/2},3/2) circle [radius=0.05];
\end{tikzpicture}
\caption{Rooted subtrees $R_{1,1}$, $R_{2,1}$, $R_{3,1}$, $R_{3,2}$, and $R_{4,2}$, respectively, with the roots enlarged. The dashed lines are the unique edges connecting the rooted subtrees to the rest of the tree.\label{fig2}}
\end{figure}
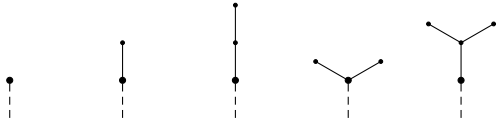

We also define $c_\lambda(R)$ for any rooted tree $R$. Given a partition $\lambda$, define $c_\lambda(R)$ as the number of ways to cut $R$ into $|\lambda|+1$ connected components such that the orders of those components not containing the root correspond to the parts of $\lambda$. In this case, no parts of $\lambda$ will be omitted from the subscript of $c_\lambda$, since the connected component containing the root is already omitted from $\lambda$. For example, the root of $R$ may also be a leaf, but it will not contribute to $c_1(R)$. In particular, we have $c_1(R_{n,1})=1$ and $c_1(R_{n,2})=2$ for all $n$.

In practice, these rooted trees will be employed as \textit{rooted subtrees} of a tree $T$. Suppose we remove an edge from $T$, splitting $T$ into two connected components. Then each component forms a rooted subtree of $T$, in which the root is the vertex incident to the edge we removed. This process produces every rooted subtree of $T$. Let the number of rooted subtrees of $T$ isomorphic to $R_{n,i}$ be denoted $\rho_{n,i}(T)$, or simply $\rho_{n,i}$. The method of this paper hinges upon the calculation of these numbers $\rho_{n,i}$, which essentially represent the small-scale shapes of the tree, from the coefficients of $X_T$.

Let $T$ have order $d$. It follows from our definitions that we have the equations \[c_n=\sum_{i=1}^{r(n)}\rho_{n,i}=\sum_{i=1}^{r(d-n)}\rho_{d-n,i}\] if $n\neq d/2$, and \[c_{d/2}=\frac{1}{2}\sum_{i=1}^{r(d/2)}\rho_{d/2,i}.\] The following lemma is a similar but more complicated equation for $c_{n,1}$. The reader is invited to check that $c_{1,1}=\binom{c_1}2+c_2$, but this does not give us any more information about $T$.

\begin{lemma}\label{lemma1}
Let $T=(V,E)$ be a tree of order $d$, and let $T_{n,i}$ denote the tree obtained from $R_{n,i}$ by forgetting the distinction of the root. Then if $2\leq n<(d-1)/2$,
\begin{equation}\label{cn1}
c_{n,1}=\sum_{i=1}^{r(n)}(c_1-c_1(R_{n,i}))\rho_{n,i}+\sum_{j=1}^{r(n+1)}c_1(T_{n+1,j})\rho_{n+1,j}.
\end{equation}
Also, if $d\geq6$, then
\begin{equation}\label{c1111}
c_{1,1,1,1}=\binom{c_1}4+\binom{c_1-1}2c_2+\binom{c_2}2+(c_1-1)\rho_{3,1}+(c_1-2)\rho_{3,2}+c_4.
\end{equation}
\end{lemma}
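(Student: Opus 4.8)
The guiding principle I would use is the combinatorial reading of the coefficients already in hand: $c_\mu(T)$ counts the edge-subsets whose deletion carves $T$ into connected blocks of the prescribed orders, the largest block being distinguished. When all non-maximal parts of $\mu$ are small, such a deletion is governed entirely by the small rooted subtrees it cuts off, so the whole task is to match deletions realizing $\mu$ with choices of (nearly) disjoint small rooted subtrees of the prescribed orders, and then to correct for the adjacencies and degeneracies this matching introduces.

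For (\ref{c1111}) I would first make the count local. Deleting four edges of a tree yields exactly five components, so a deletion contributing to $c_{1,1,1,1}$ produces four singletons and one block of order $d-4$. Let $S$ be the set of four isolated vertices. Every edge meeting $S$ must be deleted, so the deleted set contains $E(S)$, the set of edges incident to $S$; comparing the edge counts inside and outside $S$ and using that $T$ is a tree shows that the deletion equals $E(S)$ exactly, and that this happens precisely when $|E(S)|=4$, the hypothesis $d\ge6$ being what guarantees the remaining block has order $\ge2$ so that there are four singletons and not five. Hence
\[ c_{1,1,1,1}=\bigl|\{S\subseteq V:|S|=4,\ |E(S)|=4\}\bigr|. \]
Writing $e(S)$ for the number of edges of $T$ with both ends in $S$, the relation $|E(S)|=\sum_{v\in S}\deg(v)-e(S)$ recasts the condition as $\sum_{v\in S}\deg(v)=4+e(S)$, and since $T[S]$ is a forest we have $e(S)\in\{0,1,2,3\}$.

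The core of the argument is then a case analysis on the isomorphism type of $T[S]$, indexed by $e(S)$, which I expect to recover the six terms in order. For $e(S)=0$ the set $S$ is four pairwise non-adjacent leaves, giving $\binom{c_1}{4}$. For $e(S)=1$ the degrees are forced to be $(2,1,1,1)$ with the internal edge joining the degree-two vertex to a leaf, i.e.\ an $R_{2,1}$-subtree (counted by $c_2$) together with two further leaves, giving $\binom{c_1-1}{2}c_2$. For $e(S)=2$ the forest is either two disjoint edges, i.e.\ two disjoint $R_{2,1}$-subtrees ($\binom{c_2}{2}$), or a three-vertex path plus an isolated vertex, which splits according as the path's centre has degree three (an $R_{3,2}$-subtree plus an extra leaf, $(c_1-2)\rho_{3,2}$) or degree two (an $R_{3,1}$-subtree plus an extra leaf, $(c_1-1)\rho_{3,1}$). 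Finally $e(S)=3$ forces $S$ connected and joined to the rest of $T$ by a single edge, so $S$ is an order-four rooted subtree, contributing $\sum_i\rho_{4,i}$. I expect the bulk of the work, and the main obstacle, to be the adjacency bookkeeping in the cases $e(S)=1,2$: after fixing the distinguished $R_{2,1}$-, $R_{3,1}$-, or $R_{3,2}$-subtree one appends extra leaves while forbidding new internal edges, and getting the exclusions right is exactly what produces the shifts $c_1-1$ and $c_1-2$ (one excludes the leaf already used, and for the cherry both of its leaves). Making these exclusions correct requires ruling out the degenerate small trees—a degree-two vertex with two leaf neighbours, or a degree-three vertex with three—all of which are excluded by $d\ge6$, since each would force $T$ to be a path or star on at most five vertices. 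The one genuinely delicate point is the connected case, where the count $\sum_i\rho_{4,i}$ equals the coefficient $c_4$ exactly when $4\ne d/2$; I would treat $d=8$ separately, since there $\sum_i\rho_{4,i}=2c_4$.

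The first identity (\ref{cn1}) yields to the same method with $\mu=(d-n-1,n,1)$. A contributing two-edge deletion cuts $T$ into three blocks arranged in a path, of orders $d-n-1$, $n$, and $1$; since $2\le n<(d-1)/2$ these orders are distinct, so the deletion is classified by which order occupies the middle. If the largest block is in the middle, the order-$n$ block and the singleton hang off it independently: the order-$n$ block is some $R_{n,i}$ and the singleton is any leaf of $T$ outside it, of which there are $c_1-c_1(R_{n,i})$, giving the first sum. Otherwise the largest block sits at an end, the other two blocks are adjacent, and their union is an order-$(n+1)$ rooted subtree $R_{n+1,j}$ cut off by a single edge; the singleton is then exactly one leaf of the unrooted tree $T_{n+1,j}$—with the root counting when it is a leaf of $T_{n+1,j}$, which is precisely the sub-case where the singleton lies between the two larger blocks—giving $\sum_j c_1(T_{n+1,j})\rho_{n+1,j}$. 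The range $2\le n<(d-1)/2$ is what keeps these families disjoint and prevents any block from coinciding with, or being forced into, the large component.
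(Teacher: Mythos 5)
Your proposal is correct and is, at bottom, the same argument as the paper's: your case analysis on $e(S)$ reproduces exactly the six configurations of Figure~\ref{fig4}, and your classification of $2$-cuts by which block occupies the middle position is precisely Figure~\ref{fig3}, with the two configurations in which the singleton and the order-$n$ block are adjacent merging into the single sum $\sum_j c_1(T_{n+1,j})\rho_{n+1,j}$, just as in the paper. Your bookkeeping via the set $S$ of isolated vertices and the identity $|E(S)|=\sum_{v\in S}\deg(v)-e(S)$ is a tidier formalization than the paper's pictorial proof, but not a different route.

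One point in your proposal deserves emphasis, because it is not excess caution but catches an actual error in the lemma as stated. As you observe, the connected case $e(S)=3$ contributes $\sum_i\rho_{4,i}$, and this equals $c_4$ only when $4\neq d/2$; when $d=8$ one has $c_4=\frac{1}{2}\sum_i\rho_{4,i}$, so the last term of (\ref{c1111}) should be $2c_4$. Concretely, take $T$ to be the path on $8$ vertices: then $c_1=2$, $c_2=2$, $\rho_{3,1}=2$, $\rho_{3,2}=0$, $c_4=1$, and the right-hand side of (\ref{c1111}) evaluates to
\begin{equation*}
\binom{2}{4}+\binom{1}{2}\cdot2+\binom{2}{2}+1\cdot2+0\cdot0+1=4,
\end{equation*}
whereas $c_{1,1,1,1}=5$, since a contributing $4$-cut is determined by its three kept edges, which must be consecutive along the path, and there are five such triples. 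So the hypothesis of the lemma should be $d\geq6$, $d\neq8$, or else the term $c_4$ should be replaced by $\sum_{i=1}^{r(4)}\rho_{4,i}$. This propagates to the paper's subsequent use of the lemma: the system formed by $c_{1,1,1,1}$ and $c_3$ used to solve for $\rho_{3,1}$ and $\rho_{3,2}$ must use the corrected equation when $d=8$. This is harmless for Theorem~\ref{theorem2}, since $d$ is recoverable from $X_T$ and the corrected formula can be applied in that case, but it should be fixed in the statement.
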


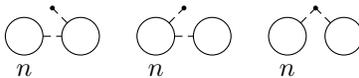
\begin{figure}[h]
\begin{tikzpicture}[scale=1/2]
\draw (0,0) circle [radius=1/2];
\node[below] at (0,-1/2) {$n$};
\draw (3/2,0) circle [radius=1/2];
\draw[fill] (3/4,3/4) circle [radius=0.05];
\draw[densely dashed] ({3/2-sqrt(2)/4},{sqrt(2)/4}) -- (3/4,3/4);
\draw[densely dashed] (1/2,0) -- (1,0);

\draw (7/2,0) circle [radius=1/2];
\node[below] at (7/2,-1/2) {$n$};
\draw (5,0) circle [radius=1/2];
\draw[fill] (17/4,3/4) circle [radius=0.05];
\draw[densely dashed] ({7/2+sqrt(2)/4},{sqrt(2)/4}) -- (17/4,3/4);
\draw[densely dashed] (4,0) -- (9/2,0);

\draw (7,0) circle [radius=1/2];
\node[below] at (7,-1/2) {$n$};
\draw (17/2,0) circle [radius=1/2];
\draw[fill] (31/4,3/4) circle [radius=0.05];
\draw[densely dashed] ({7+sqrt(2)/4},+{sqrt(2)}/4) -- (31/4,3/4);
\draw[densely dashed] ({17/2-sqrt(2)/4},{sqrt(2)/4}) -- (31/4,3/4);
\end{tikzpicture}
\caption{The three ways of joining the connected components of a partition contributing to $c_{n,1}$. The first picture corresponds to the first term of (\ref{cn1}). The labeled components have order $n$.\label{fig3}}
\end{figure}

\begin{proof}The coefficient $c_{n,1}$ tells us the number of ways we can cut $T$ in two places to get a connected component of order one and another of order $n$, and of course a third of order $d-n-1$. There are three ways of joining these components with two edges as illustrated by Figure~\ref{fig3}.

The derivation of (\ref{cn1}) should then be clear from the figure. In particular, observe that the second picture requires that the isolated vertex not originally be the root of the rooted subtree of order $n+1$, while the third requires exactly the opposite. Thus, both terms combined negate the distinction of the root.

Now let $d\geq6$. From Figure~\ref{fig4} we can see that (\ref{c1111}) holds; we require $d\geq6$ so that the non-dashed circles drawn in Figure~\ref{fig4} are distinguished from a vertex.\end{proof}

Thus the coefficients $c_{1,1,1,1}$ and $c_{3}=\rho_{3,1}+\rho_{3,2}$ (or $c_3=\frac{1}{2}\rho_{3,1}+\frac{1}{2}\rho_{3,2}$ if $d=6$) form a system of equations which allows us to solve for $\rho_{3,1}$ and $\rho_{3,2}$.

\begin{figure}[h]
\begin{tikzpicture}[scale=0.5]
\draw (0,0) circle [radius=0.5];
\draw[fill] (1,0) circle [radius=0.05];
\draw[fill] (0,1) circle [radius=0.05];
\draw[fill] (-1,0) circle [radius=0.05];
\draw[fill] (0,-1) circle [radius=0.05];
\draw[densely dashed] (1,0) -- (0.5,0);
\draw[densely dashed] (0,1) -- (0,0.5);
\draw[densely dashed] (-1,0) -- (-0.5,0);
\draw[densely dashed] (0,-1) -- (0,-0.5);

\draw (3,0) circle [radius=0.5];
\draw[fill] (4,0) circle [radius=0.05];
\draw[fill] (3,1) circle [radius=0.05];
\draw[fill] (3,1.5) circle [radius=0.05];
\draw[fill] (3,-1) circle [radius=0.05];
\draw[densely dashed] (4,0) -- (3.5,0);
\draw[densely dashed] (3,1.5) -- (3,0.5);
\draw[densely dashed] (3,-1) -- (3,-0.5);

\draw (6,0) circle [radius=0.5];
\draw[fill] (6,1) circle [radius=0.05];
\draw[fill] (6,1.5) circle [radius=0.05];
\draw[fill] (6,-1) circle [radius=0.05];
\draw[fill] (6,-1.5) circle [radius=0.05];
\draw[densely dashed] (6,1.5) -- (6,0.5);
\draw[densely dashed] (6,-1.5) -- (6,-0.5);

\draw (8.5,0) circle [radius=0.5];
\draw[fill] (8.5,1) circle [radius=0.05];
\draw[fill] (8.5,1.5) circle [radius=0.05];
\draw[fill] (8.5,2) circle [radius=0.05];
\draw[fill] (8.5,-1) circle [radius=0.05];
\draw[densely dashed] (8.5,2) -- (8.5,0.5);
\draw[densely dashed] (8.5,-1) -- (8.5,-0.5);

\draw (11,0) circle [radius=0.5];
\draw[fill] (11,1) circle [radius=0.05];
\draw[fill] ({11-sqrt(3)/4},1.25) circle [radius=0.05];
\draw[fill] ({11+sqrt(3)/4},1.25) circle [radius=0.05];
\draw[fill] (11,-1) circle [radius=0.05];
\draw[densely dashed] ({11-sqrt(3)/4},1.25) -- (11,1) -- (11,0.5);
\draw[densely dashed] ({11+sqrt(3)/4},1.25) -- (11,1);
\draw[densely dashed] (11,-1) -- (11,-0.5);

\draw (13.5,0) circle [radius=0.5];
\draw[densely dashed] (13.5,1.5) circle [radius=0.5];
\draw[densely dashed] (13.5,0.5) -- (13.5,1);
\node[right] at (14,1.5) {4};
\end{tikzpicture}
\caption{The ways of joining the connected components of the partition contributing to $c_{1,1,1,1}$. The pictures correspond to the respective terms in (\ref{c1111}).\label{fig4}}
\end{figure}
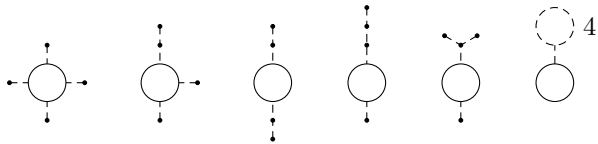

\section{Distinguishing Spiders and their Generalizations}

A \textit{spider} is a tree with exactly one vertex of degree at least three. That vertex is called the \textit{torso} of the spider, and the other vertices form paths extending from the torso called \textit{legs}. It is shown in \cite[Section 3]{MMW} that the chromatic symmetric function completely distinguishes spiders.

Call a tree a \textit{2-spider} if it is a modification of a spider in which any leg may be appended with a single vertex joined to the second outward-most vertex (with respect to the torso) of that leg. Call such modified legs the \textit{2-legs}. The reasoning behind these names is that one can think of these modified legs as being copies of the rooted subtree $R_{n,2}$, while normal legs are copies of $R_{n,1}$. We may thus call normal spiders \textit{1-spiders} and normal legs \textit{1-legs}. The order of a leg is defined to not include the root, so that the order of a (2-)spider is one more than the sum of the orders of each leg (Figure~\ref{fig5}).

\begin{figure}[h]
\begin{tikzpicture}[scale=0.5]
\draw[fill] (0,0) circle [radius=0.08];
\draw[fill] (1,0) circle [radius=0.05];
\draw[fill] (2,0) circle [radius=0.05];
\draw[fill] (2.5,{sqrt(3)/2}) circle [radius=0.05];
\draw[fill] (2.5,{-sqrt(3)/2}) circle [radius=0.05];
\draw[fill] (-2.5,{sqrt(3)/2}) circle [radius=0.05];
\draw[fill] (-2.5,{-sqrt(3)/2}) circle [radius=0.05];
\draw[fill] (0,1) circle [radius=0.05];
\draw[fill] (-1,0) circle [radius=0.05];
\draw[fill] (-2,0) circle [radius=0.05];
\draw[fill] (0,-1) circle [radius=0.05];
\draw (2.5,{sqrt(3)/2}) -- (2,0) -- (2.5,{-sqrt(3)/2});
\draw (-2.5,{sqrt(3)/2}) -- (-2,0) -- (-2.5,{-sqrt(3)/2});
\draw (0,1) -- (0,-1);
\draw (-2,0) -- (2,0);
\end{tikzpicture}
\caption{A 2-spider with two 2-legs, satisfying (ii) of Theorem~\ref{theorem2}, described by $(\lambda;\mu)=(1,1;4,4)$ and $(\lambda^*;\mu^*)=(2;2,2,2,2)$. The torso is enlarged.\label{fig5}}
\end{figure}
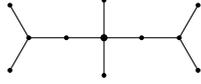

We may describe these 2-spiders up to isomorphism by a pair of positive integer sequences $(\lambda;\mu)=(\lambda_1,\dots,\lambda_\ell;\mu_1,\dots,\mu_m)$, where $\lambda$ lists the orders of the 1-legs and $\mu$ lists the orders of the 2-legs. With this in mind, we can now state and prove the main result.

\begin{theorem}\label{theorem2}
If $T_1$ and $T_2$ are non-isomorphic 2-spiders, then $X_{T_1}\neq X_{T_2}$.
\end{theorem}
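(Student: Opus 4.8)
The plan is to show that $X_T$ determines the data $(\lambda;\mu)$ describing a $2$-spider $T$, up to reordering within $\lambda$ and within $\mu$; since non-isomorphic $2$-spiders have distinct such data, this suffices. The starting observation is that a $2$-spider has a very restricted stock of rooted subtrees. Cutting a single edge yields one component containing the torso and one that does not, and any component avoiding the torso is a fragment of a single leg: either a path $R_{n,1}$ (from a $1$-leg, or the single-vertex tip of a $2$-leg) or a fork $R_{n,2}$ (from the outer portion of a $2$-leg). Whenever $n$ is small enough that the order-$n$ component is the one avoiding the torso, one checks directly that $\rho_{n,1}$ equals the number of $1$-legs of order at least $n$ (for $n\geq2$) and $\rho_{n,2}$ equals the number of $2$-legs of order at least $n$ (for $n\geq3$). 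Knowing the full sequences $\rho_{n,1}$ and $\rho_{n,2}$ thus recovers the multisets $\lambda$ and $\mu$ by successive differences, which is all of $(\lambda;\mu)$.

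The next step is to separate $\rho_{n,1}$ from $\rho_{n,2}$, and this is where Lemma~\ref{lemma1} enters. The coefficient $c_n$ supplies only the sum $\rho_{n,1}+\rho_{n,2}$, but for a $2$-spider the only surviving terms of (\ref{cn1}) are those indexed by $R_{n,1}$ and $R_{n,2}$; since $c_1(R_{n,1})=1$ and $c_1(R_{n,2})=2$, equation (\ref{cn1}) becomes $c_{n,1}=(c_1-1)\rho_{n,1}+(c_1-2)\rho_{n,2}+c_1(T_{n+1,1})\rho_{n+1,1}+c_1(T_{n+1,2})\rho_{n+1,2}$, where $T_{n+1,1}$ is a path and $T_{n+1,2}$ is a tree with three leaves, so $c_1(T_{n+1,1})=2$ and $c_1(T_{n+1,2})=3$ once $n\geq3$. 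Together with $\rho_{n,1}+\rho_{n,2}=c_n$ this is a nonsingular linear system linking the order-$n$ data to the order-$(n+1)$ data, and I would solve it as a recursion in $n$. The asymmetry between the weights $(c_1-1,c_1-2)$ and $(2,3)$ makes both the order-$n$ block and the order-$(n+1)$ block invertible, so the recursion can be run upward, anchored at $n=3$ by the independent determination of $\rho_{3,1}$ and $\rho_{3,2}$ from $c_3$ and the equation (\ref{c1111}) for $c_{1,1,1,1}$ noted just after the lemma. This yields every $\rho_{n,1},\rho_{n,2}$ in the range where the small-component description is valid, hence every short leg.

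I expect the crux to be the boundary case in which a single leg is longer than half of $T$. A $2$-spider has at least three legs, so at most one leg can have order $N>d/2$; but when such a leg exists, cutting along it produces torso-containing components of every order $n\in[d-N,d/2)$, and these extra ``inner'' rooted subtrees contaminate both $c_n$ and the relation (\ref{cn1}) precisely in that range, invalidating the clean leg-count and even the Lemma's hypothesis $n<(d-1)/2$. My intended resolution is to lean on invariants that remain unconditionally available. Running the recursion only on the uncontaminated range $n<d-N$ already pins down every leg except the single long one, since the second-longest leg has order below $d-N$. The total order fixes $\sum_i\lambda_i+\sum_j\mu_j=d-1$, and the degree sequence of $T$, recoverable from $X_T$ by \cite[Corollary 5]{MMW}, determines the number of $1$-legs $\ell$ and of $2$-legs $m$ via the leaf count $c_1=\ell+2m$, the count of degree-$3$ vertices, and the torso degree $\ell+m$. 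Knowing $\ell$ and $m$ then forces whether the remaining long leg is a $1$-leg or a $2$-leg, and the order constraint fixes its length; locating the threshold $d-N$ itself (for instance from the diameter encoded in the path sequence, or by demanding consistency of the recursion) is part of this casework. Combining the clean case with this boundary analysis exhibits $(\lambda;\mu)$ as a function of $X_T$, which completes the proof.
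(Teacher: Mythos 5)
Your machinery for the generic case is exactly the paper's: the identification $\rho_{n,1}=\lambda_n^*$, $\rho_{n,2}=\mu_n^*$ when the order-$n$ component avoids the torso, the specialization of (\ref{cn1}) to $c_{n,1}=(c_1-1)\rho_{n,1}+(c_1-2)\rho_{n,2}+2\rho_{n+1,1}+3\rho_{n+1,2}$, and the upward recursion anchored at $n=3$ by $c_3$ and (\ref{c1111}). You have also correctly located the crux (a single leg of order $N>d/2$), and your finishing move there --- recovering $\ell$ and $m$ from the degree sequence via \cite[Corollary 5]{MMW} and getting the last leg's type and order by subtraction --- is sound, and is a pleasant alternative to the paper's ending, which instead reads the long leg's type off whichever of $\rho_{d-k-1,1}$, $\rho_{d-k-1,2}$ is nonzero.

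The genuine gap is that your procedure must first decide, from $X_T$ alone, whether a leg of order $N>d/2$ exists and what the threshold $d-N$ is, and neither of your proposed detection mechanisms is established. ``Demanding consistency of the recursion'' does not obviously work: suppose $T$ has at least four legs and the long leg is a $1$-leg, and run the recursion one step past the threshold $t=d-N$. The true instance of Lemma~\ref{lemma1} contributes $c_1(T_{t,1})+c_1(T_{t,i})=2+(c_1-1)$, where $R_{t,i}$ is the torso-containing subtree, so the corrupted system is $2\tilde\rho_{t,1}+3\tilde\rho_{t,2}=c_1+1$ together with $\tilde\rho_{t,1}+\tilde\rho_{t,2}=c_t=2$, which has the nonnegative integer solution $(5-c_1,\,c_1-3)$ whenever $c_1\leq5$; no contradiction surfaces at the step where contamination begins, and you give no argument that one ever surfaces later. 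The diameter idea is circular as stated: the diameter reflects the two deepest legs, and the second-deepest is known only after running the recursion, which is what you are trying to calibrate. The paper closes exactly this hole with an observation you are missing: if every leg has order at most $d/2$, then $c_i\geq c_j$ for $3\leq i<j<d/2$, since $c_i=\lambda_i^*+\mu_i^*$; whereas a leg of order $k>d/2$ forces $c_{d-k-1}=1<2=c_{d-k}$. Hence the first ascent in the sequence $c_3,c_4,\dots$ both detects the long-leg case and pinpoints $k$, after which your degree-sequence finish goes through. Relatedly, the degenerate $2$-spiders with exactly three legs, two of order one, need separate treatment (the paper's case (i)): there the torso-containing components are themselves forks, so the identification $\rho_{3,2}=\mu_3^*$ already fails at the base case and the uncontaminated range is empty; your fallback can absorb this case, but only once the detection problem above is solved.
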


\begin{proof}Let $T$ be a 2-spider with order $d$; the smallest 2-spider which is not also a 1-spider has order 6, so assume $d\geq6$. We will show that $T$, knowing that it is a 2-spider, can be reconstructed from $X_T$.

We would like to determine, for $1\leq n\leq d/2$, the number $\lambda_n^*$ of 1-legs with order at least $n$ and the number $\mu_n^*$ of 2-legs with order at least $n$ (although necessarily $\mu_1^*=\mu_2^*=\mu_3^*$, since any 2-leg must have order at least 3). If we consider $\lambda$ and $\mu$ to be partitions, then $\lambda^*$ and $\mu^*$ are their respective conjugate partitions, so with this information we can recover $T$.

The proof is separated based on the following (neither mutually exclusive nor exhaustive) possibilities:
\begin{enumerate}
\item $T$ has only three legs, two of order one.
\item If a leg of $T$ has order $n$, then $n\leq d/2$.
\end{enumerate}

First, suppose $T$ satisfies (i). We find that either $\rho_{3,2}=1$ and $c_1=3$, or $\rho_{3,2}=2$ and $c_1=4$. This is sufficient to distinguish this case, and we also know that the number of 2-legs of $T$ must be $\rho_{3,2}-1$. From this it is easy to reconstruct $T$ since we know $d$.

Now suppose $T$ satisfies (ii) but not (i). Suppose we cut an edge of some leg $L$ to split $T$ into two rooted subtrees. Then the connected component containing the torso will not be of the form $R_{n,1}$ or $R_{n,2}$ for any $n$ (note that this is false if $T$ satisfies (i)). However, the other connected component will be of the form $R_{n,1}$ if $L$ is a 1-leg or $R_{n,2}$ if $L$ is a 2-leg, and $L$ must have order at least $n$. Thus if $n\geq2$, $\lambda_n^*=\rho_{n,1}$ and $\mu_n^*=\rho_{n,2}$ (Figure~\ref{fig6}). However, if $n=1$ then $\rho_{1,1}$ also counts both leaves on each 2-leg, so to correct for this double counting we instead use \[\lambda_1^*=\rho_{1,1}-2\rho_{3,2}.\]

\begin{figure}[b]
\begin{tikzpicture}[scale=0.5]
\draw[fill] (0,0) circle [radius=0.08];
\draw[fill] (1,0) circle [radius=0.05];
\draw[fill] (2,0) circle [radius=0.05];
\draw[fill] (2.5,{sqrt(3)/2}) circle [radius=0.05];
\draw[fill] (2.5,{-sqrt(3)/2}) circle [radius=0.05];
\draw[fill] (-2.5,{sqrt(3)/2}) circle [radius=0.05];
\draw[fill] (-2.5,{-sqrt(3)/2}) circle [radius=0.05];
\draw[fill] (0,1) circle [radius=0.05];
\draw[fill] (-1,0) circle [radius=0.05];
\draw[fill] (-2,0) circle [radius=0.05];
\draw[fill] (0,-1) circle [radius=0.05];
\draw (2.5,{sqrt(3)/2}) -- (2,0) -- (2.5,{-sqrt(3)/2});
\draw (-2.5,{sqrt(3)/2}) -- (-2,0) -- (-2.5,{-sqrt(3)/2});
\draw (0,1) -- (0,-1);
\draw (-2,0) -- (-1, 0);
\draw[densely dashed] (-1,0) -- (0,0);
\draw (0,0) -- (2,0);
\end{tikzpicture}
\caption{The dashed edge represents the edge we cut. Note that we have split the tree into two subtrees, one of which is of the form $R_{4,2}$, and the other, containing the torso, which is not of the form $R_{n,1}$ or $R_{n,2}$ since the tree does not satisfy (i). Thus the leg we cut contributes to $\rho_{4,2}$, and of course this means it must be a 2-leg with order at least 4. Note that we can also cut this same leg on different edges to contribute to $\rho_{3,2}$ and twice to $\rho_{1,1}$. \label{fig6}}
\end{figure}
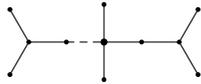

We know $\rho_{1,1}=c_1$ and $\rho_{2,1}=c_2$, and moreover we know $\rho_{3,1}$ and $\rho_{3,2}$ in terms of the coefficients $c_\lambda$ from (\ref{c1111}) and $c_3=\rho_{3,1}+\rho_{3,2}$. Now let $4\leq n<d/2$. It follows from the length restriction (ii) that any rooted subtree of $T$ containing the torso must have order at least $d/2$. Thus since any rooted subtree not containing the torso is either of the form $R_{n,1}$ or $R_{n,2}$, we know that if $i$ is not 1 or 2, then $\rho_{n,i}=0$. Then \[c_n=\rho_{n,1}+\rho_{n,2},\] and, by Lemma~\ref{lemma1}, \[c_{n-1,1}=(c_1-1)\rho_{n-1,1}+(c_1-2)\rho_{n-1,2}+2\rho_{n,1}+3\rho_{n,2}.\] Thus we may solve inductively for all such $\rho_{n,1}$ and $\rho_{n,2}$ with $n=3$ as the base case.

Now suppose $n=d/2$; there can be at most one leg of this order. From (\ref{cn1}) we can tell if there is a leg of order $n$ by examining $c_{n-1,1}$, since if there were not, the second sum in the equation would be zero. Note then that exactly one of $\rho_{n-1,1}$ and $\rho_{n-1,2}$ can be nonzero, since a 2-spider cannot have a leg of order $d/2-1$ if it also has one of order $d/2$. Thus we know the type of that largest leg. 

Notice that in case (ii), $c_i\geq c_j$ if $3\leq i<j<d/2$ since \[c_i=\rho_{i,1}+\rho_{i,2}=\lambda_i^*+\mu_i^*\geq\lambda_j^*+\mu_j^*=c_j.\]

Finally, suppose $T$ satisfies none of the cases. The smallest such 2-spider has order 9, so suppose $d\geq9$. Let the length of the (unique) largest leg have order $k>d/2$. A quick calculation verifies that we cannot also have a leg of order $d-k-1$ or larger, and that \[3\leq d-k-1<\frac{d}{2}-1.\] Thus $c_{d-k-1}=1$, but $c_{d-k}=2$, which distinguishes this case from the second case (Figure~\ref{fig7}). Moreover, we only need to find $\lambda_n^*$ and $\mu_n^*$ up to $n=d-k-1$, which will give the types of all legs, and their respective lengths, excluding the length of the longest leg. To do this, we simply solve for $\rho_{3,1}$ and $\rho_{3,2}$ and do the same process as in the second case up to $c_{d-k-2,1}$. Then we can find the length of the longest leg, thus reconstructing $T$, by looking at $d$.\end{proof}

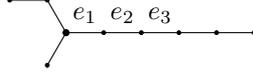
\begin{figure}
\begin{tikzpicture}[scale=0.5]
\draw[fill] (0,0) circle [radius=0.08];
\draw[fill] (1,0) circle [radius=0.05];
\draw[fill] (2,0) circle [radius=0.05];
\draw[fill] (3,0) circle [radius=0.05];
\draw[fill] (4,0) circle [radius=0.05];
\draw[fill] (5,0) circle [radius=0.05];
\draw[fill]  (-1.5,{sqrt(3)/2}) circle [radius=0.05];
\draw[fill] (-0.5,{sqrt(3)/2}) circle [radius=0.05];
\draw[fill] (-0.5,{-sqrt(3)/2}) circle [radius=0.05];
\draw (-1.5,{sqrt(3)/2}) -- (-0.5,{sqrt(3)/2}) -- (0,0) -- (-0.5,{-sqrt(3)/2});
\draw (0,0) -- (5,0);
\draw (0.5,0) node[above] {$e_1$};
\draw (1.5,0) node[above] {$e_2$};
\draw (2.5,0) node[above] {$e_3$};
\end{tikzpicture}
\caption{A 2-spider which satisfies neither (i) nor (ii), with torso enlarged; here $d=9$ and $k=5$. The edge $e_3$ is the only one we can cut to get a connected component of order 3, but both $e_1$ and $e_2$ give connected components of order 4, that is, $c_3=1$ but $c_4=2$.\label{fig7}}
\end{figure}

It turns out that solving Stanley's conjecture can be done by simply solving for all $\rho_{n,i}$ in terms of the coefficients of $X_T$. In Section 2 we did this for $n=3$, by examining $c_{3}$ and $c_{1,1,1,1}$, which by itself proves Stanley's conjecture for all trees of order at most 7. This avoids the unpleasant case-checking which allows Theorem~\ref{theorem2} to verify Stanley's conjecture for an infinite family. While there are some interesting heuristics for determining which $c_{\lambda}$ can yield which coefficients on the desired $\rho_{n,i}$ in its expansion, even for $n=4$ this is very difficult to do.

Even worse, if we were attempting to distinguish trees of order 20, we would need to solve for each variable $\rho_{10,i}$, of which there would be more than coefficients of $X_T$! What this seems to imply, then, is that there must be some “hidden” information about the values $\rho_{n,i}$ can take, as a consequence of the definition of a tree, that is not expressed directly in the chromatic symmetric function.

\subsection*{Acknowledgements.} This paper is the result of The Ohio State University's Knots and Graphs undergraduate research program, which can be found at \[\text{\url{http://www.math.ohio-state.edu/~chmutov/wor-gr-su18/wor-gr.htm},}\] and I am grateful to the OSU Honors Program Research Fund for financial support. I would like to thank my wonderful mentor Sergei Chmutov for his support and his knowledge of the topic, and am also indebted to Eric Fawcett, Rushil Raghavan, Ishaan Shah, and the other participants for our fruitful (and also not-so-fruitful) discussions throughout the summer.

\printbibliography

\end{document}